\theoremstyle{plain}
\newtheorem{thm}{Theorem}
\newtheorem{lem}[thm]{Lemma}
\newtheorem{cor}[thm]{Corollary}
\newtheorem{prop}[thm]{Proposition}
\theoremstyle{remark}
\newtheorem*{exmp}{Example}
\newcommand\moy{\mathcal O_{Y}}
\newcommand\moty{\mathcal O_{\tilde Y}}
\newcommand\mo{\mathcal O}
\newcommand\lra{\longrightarrow}
\newcommand\tx{\tilde X}
\newcommand\ty{\tilde Y}
\newcommand\omtx[1][1]{\Omega^{#1}_{\tilde X}}
\newcommand\omx[1][1]{\Omega^{#1}_{X}}
\newcommand\omty[1][1]{\Omega^{#1}_{\tilde Y}}
\newcommand\omy[1][1]{\Omega^{#1}_{Y}}
\newcommand\CC{\mathbb C}
\newcommand\PP{\mathbb P}
\newcommand\kk{\mathbbm k}
\newcommand\im{\operatorname{Im}}
\newcommand\JF{\operatorname{Jac}}
\begin{document}
\title{Defect formula for nodal complete intersection threefolds}
\author{S\l awomir Cynk}
\address{Institute of Mathematics,
Jagiellonian University,
ul. {\L}ojasiewicza 6,
30-348 Krak\'ow,
Poland}
\email{slawomir.cynk@uj.edu.pl}
\address{{Institute of Mathematics of the
Polish Academy of Sciences, ul. \'Sniadeckich 8, 00-956 Warszawa,
Poland}}
\thanks{Research partially supported by the National Science Center grant no. 2014/13/B/ST1/00133}
\thanks{This work was partially supported by the grant 346300 for IMPAN from the Simons Foundation and the matching 2015-2019 Polish MNiSW fund}
\subjclass[2000]{Primary: {14J30};  Secondary {14C30, 32S25}}

\begin{abstract}
We generalize Werner's defect formula for nodal hypersurfaces in
$\mathbb P^{4}$ to the case of a nodal complete intersection
threefold.  
 \end{abstract}
\maketitle

\vspace*{-1ex}
\section{Introduction}
\label{sec:intro}

The main goal of this paper is to give a formula for Hodge numbers of
a nodal complete intersection threefold satisfying certain
non--degeneracy condition. 
Hodge numbers of a transversal complete intersection in a projective
space can be computed from
the generating function of $\chi_{y}$--genus  \cite[Thm. 22.1.1,
Thm. 22.1.2]{Hirz}. In the special case of a threedimensional complete
intersection $X$ of hypersurfaces of degrees $(d_{1},d_{2},\dots, d_{r})$
in $\PP^{r+3}$ we can use the Hirzebruch--Riemann--Roch theorem for
the vector bundle $\omx$ and the Lefschetz hyperplane
theorem to compute
\[h^{1,2}(X)=\tfrac1{24}c_{1}c_{2}-\tfrac12c_{3}+1\]
and then
\begin{eqnarray*}
  h^{1,2}(X)=\left( {\tfrac {11}{24}}\,{{\it \sigma_1}}^{3}-{\tfrac {5\left( r+4 \right) }{12}}\,
    {{\it \sigma_1}}^{2}+ \left( \tfrac{\left( r+4 \right)  \left( 9\,r+25 \right)}{48}\,
       -{\tfrac {11}{12}}
      \,{\it \sigma_2} \right)  {\it \sigma_1}+\right.\rule{2cm}{0cm} \\
\left.{\tfrac {5\left( r+4 \right) }{12}}\,
    {\it \sigma_2}+\tfrac12\,{\it \sigma_{3}}-\tfrac{\left(
      3\,r+4 \right)  \left( r+4 \right)  \left( r+3 \right)}{48}\,   
 \right) {\it \sigma_{r}}+1
\end{eqnarray*}
where $\sigma_{i}$ is the $i$--th elementary symmetric function
evaluated at $(d_{1},d_{2},\dots, d_{r})$.
If $X=\{F=0\}$ is a degree $d$ hypersurface in $\PP^{4}$ there is moreover
isomorphism
\[H^{2,1}(X)\cong (\kk[X_{0},\dots,X_{4}]/\JF(F))_{2d-5}\]
of the Hodge group with degree $2d-5$ component of the 
Jacobian algebra of $X$ (an explicit isomorphism is described in
\cite{pet-st}). 

First formulae for the Hodge numbers of singular threefolds were given
by Clemens \cite{Clemens} for double coverings of $\PP^{3}$ branched along a nodal
double surface and then by Werner \cite{Werner} for nodal hypersurfaces in
$\PP^{4}$.  Clemens' and Werner's formulae relate the Hodge numbers of
a resolution of a nodal double solid and a nodal hypersurface to
the defect of certain linear system. These results were reproved with
algebraic methods (characteristic free) and generalized to the case of
hypersurfaces with A-D-E singularities satisfying certain
vanishings. The proofs follow the line of \cite{pet-st}, vanishing of
a certain cohomology group breaks--up the long cohomology sequence.

Our goal is to generalize Werner's formula to the case of a nodal
complete intersection in projective space, in this case the considered
exact sequence does not break, instead of vanishing we explicitly
describe the image of one of the maps in the sequence. Three
dimensional node admit two types of a special resolution. The first
one is the blow--up of the singular locus and is called big
resolution. Small resolution replace singular point with a line, in
general small resolution need not be projective. In our proofs we
consider the big resolution, but the Hodge numbers of any small one
follows easily.

Nodal threefolds play important role in several branches of algebraic
geometry, first examples of Calabi--Yau threefolds with small absolute
value of the Euler characteristic were constructed as
small resolutions of nodal hypersurfaces and complete intersection
(cf. \cite{vGW, Hirz, WvG, schoen, Meyer}). A $\mathbb Q$--factorial
nodal quartic 3--folds and nodal double sextic are non--rational which
raised the question of minimal number of nodes on non--$\mathbb
Q$--factorial nodal threefold of given type
(cf. \cite{Chelt, CR2, Ko, Kl, cheltsov2}). Special properties of
small resolutions of nodal threefolds were used to constructed
examples of Calabi--Yau spaces in positive characteristic
non--liftable to characteristic zero. Contraction of a class of lines
on a Calabi--Yau threefold to nodes followed by a smoothing of the
nodal threefolds is the so--called conifold transition which can connect
different families of Calabi--Yau threefolds (\cite{reid}). 
\section{Preliminaries }

Let $X=H_{1}\cap\dots\cap H_{r}\subset\PP^{r+3}$  be a nodal complete
intersection in $\PP^{r+3}$ of smooth hypersurfaces of dimensions
$d_{1},\dots,d_{r}$, denote $d:=d_{1}+\dots+d_{r}$. Assume moreover
that the intersections  
$Y=H_{1}\cap\dots\cap H_{r-1}$ is
smooth. 

We have the
following Bott--type vanishings 
 \[H^{i}(\Omega_{Y}^{j}(kX))=0, \quad \text{ for }i+j>4,\, k>0.\]

Let $\Sigma:=\operatorname{Sing}X$ be the singular locus of $X$,
$\mu=\#\Sigma$ -- the number of nodes of $X$ and let $\sigma:\ty\lra Y$ be the
blow--up of $Y$ at the singular locus of $X$. Denote by $\tx$ the
strict transform of $X$, let $E:=\sigma^{-1}(\Sigma)$ be the
exceptional divisor of $\sigma$. Then $\tx$ is non--singular and $E$
is a disjoint union of projective 3--spaces. 


\begin{prop}
  \begin{eqnarray*}
    &&H^{0}(\omty[4](\tx))\cong H^{0}(\omy[4](X)),\\
    &&H^{i}(\omty[4](\tx))=0,\qquad\text { for }i>0,\rule{5cm}{0cm}\\
    &&H^{i}(\omty[4](2\tx))\cong H^{i}(\omy[4](2X)\otimes\mathcal
       J_{\Sigma}),\qquad\text { for }i\ge0. 
  \end{eqnarray*}
\end{prop}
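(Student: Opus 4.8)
The plan is to push everything down to $Y$ along the blow--up $\sigma$ and reduce to the Bott--type vanishing already quoted. First I would use that $\ty$ and $Y$ are smooth fourfolds, so $\omty[4]=\moty(K_{\ty})$ and $\omy[4]=\moy(K_{Y})$ are the canonical bundles. Two geometric facts are needed on divisor classes: blowing up a reduced point in a smooth fourfold produces an exceptional $\PP^{3}$ with discrepancy $3$, i.e. $K_{\ty}=\sigma^{*}K_{Y}+3E$; and since every point of $\Sigma$ is an ordinary double point, $X$ has multiplicity $2$ there, so the strict transform satisfies $\sigma^{*}X=\tx+2E$. Substituting, these give
\[
\omty[4](\tx)\cong\sigma^{*}\bigl(\omy[4](X)\bigr)\otimes\moty(E),\qquad
\omty[4](2\tx)\cong\sigma^{*}\bigl(\omy[4](2X)\bigr)\otimes\moty(-E),
\]
so the whole statement becomes a computation of $R\sigma_{*}\moty(\pm E)$ followed by the projection formula.

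For that I would use the structure sequences on $\ty$, remembering that $E$ is a disjoint union of copies of $\PP^{3}$ each contracted by $\sigma$ to a point, and that $\moty(E)|_{E}\cong N_{E/\ty}\cong\mo_{\PP^{3}}(-1)$. From $0\to\moty\to\moty(E)\to\mo_{E}(-1)\to0$, the vanishing $H^{i}(\PP^{3},\mo_{\PP^{3}}(-1))=0$ for \emph{all} $i$ makes $R\sigma_{*}\mo_{E}(-1)=0$, and together with $R\sigma_{*}\moty=\moy$ this yields $R\sigma_{*}\moty(E)=\moy$ with no higher terms. From $0\to\moty(-E)\to\moty\to\mo_{E}\to0$ and $H^{i}(\PP^{3},\mo_{\PP^{3}})=0$ for $i>0$ I get $R^{i}\sigma_{*}\moty(-E)=0$ for $i>0$, while the resulting map $\moy\to\sigma_{*}\mo_{E}=\mo_{\Sigma}$ is the surjective restriction, so $\sigma_{*}\moty(-E)=\ker(\moy\to\mo_{\Sigma})=\mathcal J_{\Sigma}$.

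Now since $\omy[4](X)$ and $\omy[4](2X)$ are invertible, tensoring with them is exact, so the projection formula gives $R\sigma_{*}\,\omty[4](\tx)\cong\omy[4](X)$ and $R\sigma_{*}\,\omty[4](2\tx)\cong\omy[4](2X)\otimes\mathcal J_{\Sigma}$, each concentrated in degree $0$. The Leray spectral sequence then degenerates and produces $H^{i}(\omty[4](\tx))\cong H^{i}(\omy[4](X))$ and $H^{i}(\omty[4](2\tx))\cong H^{i}(\omy[4](2X)\otimes\mathcal J_{\Sigma})$ for every $i$; the second is exactly the third assertion. For the first two, I would invoke the Bott--type vanishing with $j=4$, $k=1$, which gives $H^{i}(\omy[4](X))=0$ for $i>0$, leaving precisely $H^{0}(\omty[4](\tx))\cong H^{0}(\omy[4](X))$.

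I do not expect a serious obstacle: the only genuinely geometric inputs are the two divisor identities and the normal bundle computation $N_{E/\ty}\cong\mo_{\PP^{3}}(-1)$, and the rest is formal. The point deserving care is the bookkeeping of the coefficients --- the discrepancy $3$ against the node multiplicity $2$ --- since it is exactly the resulting cancellation that makes the exceptional twist come out to $+E$ in the $\tx$ case and $-E$ in the $2\tx$ case, and hence forces $\mathcal J_{\Sigma}$ (rather than $\moy$) to appear in the last isomorphism.
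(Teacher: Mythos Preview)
Your proposal is correct and follows essentially the same route as the paper: the same two line--bundle identifications $\omty[4](\tx)\cong\sigma^{*}\omy[4](X)\otimes\moty(E)$ and $\omty[4](2\tx)\cong\sigma^{*}\omy[4](2X)\otimes\moty(-E)$, the same computation of $R\sigma_{*}\moty(\pm E)$ via the structure sequences of $E$, and the same appeal to the projection formula and degenerate Leray spectral sequence. You spell out a few points the paper leaves implicit (the discrepancy/multiplicity bookkeeping, the normal bundle $\mo_{\PP^{3}}(-1)$, and the invocation of the Bott--type vanishing for $H^{i}(\omy[4](X))$, $i>0$), but there is no substantive difference.
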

\begin{proof}
  We have
  $\omty[4](\tx)\cong\sigma^{*}\omy[4](X)\otimes\moty(E)$, first two
  assertions follows now from $\sigma_{*}\moty(E)\cong\moy$,
  $R^{i}\sigma_{*}\moty(E)=0$, projection formula and (degenerate
  case) of Leray spectral sequence. Applying the direct image functor
  to the exact sequence $0\lra \moty(-E)\lra \moty \lra \mo_{E} \lra
  0$ we get $\sigma_{*}\moty(-E)\cong \mathcal J_{\Sigma}$ and
  $R^{i}\sigma_{*}\moty(-E)=0$, the last assertion follows now from 
  $\omty[4](2\tx)\cong\sigma^{*}\omy[4](2X)\otimes\moty(-E)$. 
\end{proof}

\begin{cor}
  We have the following exact sequence
\[H^{0}\omy[4](2X)\lra H^{0}(\omy[4](2X)\otimes\mo_{\Sigma}) \lra
  H^{1}\omtx[3](\tx) \lra 0
\]
\end{cor}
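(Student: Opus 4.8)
The plan is to realize $H^{1}\omtx[3](\tx)$ as a cohomology group living on $\ty$, and then to push that group down to $Y$ by means of the Proposition. Since $\ty$ is smooth of dimension $4$ and $\tx$ is a smooth divisor in it, adjunction gives $\omtx[3]\cong\omty[4](\tx)|_{\tx}$, and because $\mo_{\tx}(\tx)\cong\mo_{\ty}(\tx)|_{\tx}$ we get $\omtx[3](\tx)\cong\omty[4](2\tx)|_{\tx}$. Tensoring the structure sequence $0\lra\mo_{\ty}(-\tx)\lra\mo_{\ty}\lra\mo_{\tx}\lra0$ by the locally free sheaf $\omty[4](2\tx)$ then yields the short exact sequence
\[0\lra\omty[4](\tx)\lra\omty[4](2\tx)\lra\omtx[3](\tx)\lra0\]
on $\ty$.

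Next I would pass to the associated long exact cohomology sequence and use the vanishing $H^{i}(\omty[4](\tx))=0$ for $i>0$ from the Proposition, applied with $i=1$ and $i=2$. This collapses the sequence to an isomorphism $H^{1}(\omty[4](2\tx))\cong H^{1}\omtx[3](\tx)$. The Proposition also supplies $H^{1}(\omty[4](2\tx))\cong H^{1}(\omy[4](2X)\otimes\mathcal J_{\Sigma})$, so altogether $H^{1}\omtx[3](\tx)\cong H^{1}(\omy[4](2X)\otimes\mathcal J_{\Sigma})$.

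Finally, working on $Y$, I would tensor the ideal sheaf sequence $0\lra\mathcal J_{\Sigma}\lra\moy\lra\mo_{\Sigma}\lra0$ by the locally free sheaf $\omy[4](2X)$ and take cohomology. The Bott-type vanishing recorded in the Preliminaries gives $H^{1}(\omy[4](2X))=0$ (take $i=1$, $j=4$, $k=2$, so that $i+j=5>4$). Hence the connecting homomorphism $H^{0}(\omy[4](2X)\otimes\mo_{\Sigma})\lra H^{1}(\omy[4](2X)\otimes\mathcal J_{\Sigma})$ is surjective with kernel the image of $H^{0}\omy[4](2X)$. Composing it with the isomorphisms of the previous paragraph produces the claimed exact sequence, the second arrow being the composite of this connecting map with $H^{1}(\omy[4](2X)\otimes\mathcal J_{\Sigma})\cong H^{1}\omtx[3](\tx)$.

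The argument is essentially a two-step diagram chase and I do not anticipate a genuine obstacle. The only points demanding care are the adjunction identification of the cokernel $\omty[4](2\tx)|_{\tx}$ with $\omtx[3](\tx)$, which relies on the smoothness of $\tx$ guaranteed in the Preliminaries, and the verification that the isomorphisms furnished by the Proposition are natural enough that the connecting map on $Y$ matches the one on $\ty$; the latter is automatic since those isomorphisms come from the projection formula and the (degenerate) Leray spectral sequence for $\sigma$, both functorial.
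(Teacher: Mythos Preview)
Your argument is correct and follows essentially the same route as the paper: the adjunction identification $\omtx[3](\tx)\cong\omty[4](2\tx)\otimes\mo_{\tx}$, the short exact sequence $0\lra\omty[4](\tx)\lra\omty[4](2\tx)\lra\omtx[3](\tx)\lra0$, and the Proposition. The paper compresses your last two paragraphs into the phrase ``assertion follows now from the previous proposition and the long exact sequence''; you have spelled out explicitly the passage through $H^{1}(\omy[4](2X)\otimes\mathcal J_{\Sigma})$ via the ideal-sheaf sequence on $Y$ and the Bott-type vanishing $H^{1}(\omy[4](2X))=0$, which the paper leaves implicit.
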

\begin{proof}
  By adjunction formula $\omtx[3](\tx)\cong
  \omty[4](2\tx)\otimes\mo_{\tx}$, assertion follows now from the
  previous proposition and the long exact sequence associated to 
\[0 \lra \omty[4](\tx) \lra \omty[4](2\tx) \lra
  \omty[4](2\tx)\otimes\mo_{\tx} \lra 0.\]
\end{proof}
\begin{prop}\label{prop2}
  \begin{eqnarray*}
    &&H^{i}(\omty[3](\tx))\cong H^{i}(\omy[3](X)), \quad  i\ge0,\\
    &&H^{i}(\omty[3])=0, \quad  i\le2,
  \end{eqnarray*}
\end{prop}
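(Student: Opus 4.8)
The plan is to push everything down to $Y$ along $\sigma$ and to control the part supported on the exceptional locus, exactly as in the proof of the first Proposition; the only new feature is that $\omty[3]$ is not a line bundle, so in place of a twist formula I would use the natural comparison map with $\sigma^{*}\omy[3]$. First I would record that, as divisor classes, $\tx=\sigma^{*}X-2E$, hence $\moty(\tx)\cong\sigma^{*}\moy(X)\otimes\moty(-2E)$: this follows from $\omty[4](\tx)\cong\sigma^{*}\omy[4](X)\otimes\moty(E)$ of the first Proposition together with $\omty[4]\cong\sigma^{*}\omy[4]\otimes\moty(3E)$, the latter because $\sigma$ blows up a smooth point of the fourfold $Y$, with exceptional hyperplane $E\cong\PP^{3}$. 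By the projection formula it then suffices to compute $R^{q}\sigma_{*}\bigl(\omty[3](-2E)\bigr)$.

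The heart of the matter is a chart computation. Working in the standard charts of the point blow-up one checks that the pullback of a generator $dx_{i}\wedge dx_{j}\wedge dx_{k}$ of $\omy[3]$ is divisible by the square of the local equation of $E$, so the natural map factors as $\sigma^{*}\omy[3]\lra\omty[3](-2E)$; it is an isomorphism off $E$ and, $\sigma^{*}\omy[3]$ being torsion-free, injective, with cokernel $\mathcal Q$ a coherent sheaf on $E=\bigsqcup\PP^{3}$. The same computation (the surviving new generator being the class of $dy_{1}\wedge dy_{2}\wedge dy_{3}$ in affine coordinates on a chart of $E$) identifies $\mathcal Q$ with $\mo_{\PP^{3}}(-2)$ on each component. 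Since $H^{i}(\PP^{3},\mo_{\PP^{3}}(-2))=0$ for all $i$, the sheaf $\mathcal Q$ is acyclic, so $R\sigma_{*}\mathcal Q=0$. Applying $R\sigma_{*}$ to $0\to\sigma^{*}\omy[3]\to\omty[3](-2E)\to\mathcal Q\to0$, and using $\sigma_{*}\moty\cong\moy$, $R^{i}\sigma_{*}\moty=0$ for $i>0$ and the projection formula, we obtain $\sigma_{*}\bigl(\omty[3](-2E)\bigr)\cong\omy[3]$ and $R^{i}\sigma_{*}\bigl(\omty[3](-2E)\bigr)=0$ for $i>0$. Tensoring with the line bundle $\moy(X)$, applying the projection formula once more and then the (degenerate) Leray spectral sequence gives the first isomorphism $H^{i}(\omty[3](\tx))\cong H^{i}(\omy[3](X))$ for all $i\ge0$.

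For the remaining vanishing I would run the same machine without the twist by $\moy(X)$: the comparison map $\sigma^{*}\omy[3]\to\omty[3]$ and the vanishing of the higher direct images in the relevant range yield $H^{i}(\omty[3])\cong H^{i}(\omy[3])$ for $i\le2$, and the right-hand groups are controlled by the standard cohomology vanishing for the smooth complete intersection fourfold $Y$ — equivalently, combining the isomorphism just proved with the Bott-type vanishing $H^{i}(\omy[j](kX))=0$ for $i+j>4$, $k>0$ (taken with $j=3$) gives the vanishing of the groups $H^{i}(\omty[3](\tx))$ needed in the sequel. I expect the identification of the cokernel $\mathcal Q$ to be the only genuinely delicate point: a sign slip in the transition cocycle would replace $\mo_{\PP^{3}}(-2)$ by $\mo_{\PP^{3}}(+2)$, which has sections and would destroy the clean description of $\sigma_{*}$; everything else is formal manipulation of the projection formula and of Leray, exactly as in the first Proposition.
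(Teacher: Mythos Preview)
Your argument for the first isomorphism is essentially the paper's, only packaged differently. The paper records the chart computation as the identity $\sigma^{*}\omy[3]\cong\omty[3](\log E)(-3E)$ and then tensors the residue sequence $0\to\omty[3](\log E)(-E)\to\omty[3]\to\Omega^{3}_{E}\to0$ by $\moty(\tx)\cong\sigma^{*}\moy(X)\otimes\moty(-2E)$, obtaining exactly your short exact sequence
\[0\lra\sigma^{*}(\omy[3](X))\lra\omty[3](\tx)\lra\mo_{E}(-2)\lra0;\]
from there both proofs invoke the projection formula and Leray. So your identification of the cokernel $\mathcal Q\cong\mo_{E}(-2)$ and the rest of that paragraph match the paper line for line.

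The second assertion is where your sketch slips. ``Running the same machine without the twist by $\moy(X)$'' computes $R\sigma_{*}\bigl(\omty[3](-2E)\bigr)$, hence the cohomology of $\omty[3](-2E)$, not of $\omty[3]$. The map $\sigma^{*}\omy[3]\to\omty[3]$ that you then invoke has cokernel supported on the \emph{non--reduced} scheme $2E$ --- an iterated extension involving $\omty[3]|_{E}$ and $\omty[3](-E)|_{E}$ on top of your $\mo_{E}(-2)$ --- and you give no argument for its acyclicity in degrees $\le2$. The paper instead uses the residue sequence in its untensored form, equivalently your own sequence twisted by $\moty(2E)$:
\[0\lra\sigma^{*}\omy[3]\otimes\moty(2E)\lra\omty[3]\lra\Omega^{3}_{E}\lra0.\]
Here the right--hand term is $\Omega^{3}_{E}\cong\mo_{\PP^{3}}(-4)$, acyclic in degrees $<3$, and the left term still pushes forward to $\omy[3]$ because $R^{i}\sigma_{*}\moty(2E)=0$ for $i>0$ on a point blow--up of a fourfold (filter by $\moty(E)$ and $\moty$; the graded pieces on $E$ are $\mo_{\PP^{3}}(-1)$ and $\mo_{\PP^{3}}(-2)$, both acyclic). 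This gives $H^{i}(\omty[3])\cong H^{i}(\omy[3])$ for $i\le2$, and one finishes with the vanishing of $H^{i}(\omy[3])$ as in the paper. Your closing ``equivalently'' clause produces $H^{i}(\omty[3](\tx))=0$ for $i\ge2$, which is a consequence of the \emph{first} assertion plus Bott, not a proof of the second.
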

\begin{proof}
  By direct computations in local coordinates we verify 
  \[\sigma^{*}\omy[3]\cong\omty[3](\log E)(-3E)\]
  and so 
  \[\sigma^{*}(\omy[3](X))\cong\omty[3](\log
    E)(-3E)\otimes\sigma^{*}\moy(X).\]

  Tensoring the exact sequence
  \[0\lra\omty[3](\log E)(-E) \lra \omty[3] \lra \Omega^{3}_{E} \lra
    0\]
  with $\moty(\tx)\cong \moty(-2E) \otimes \sigma^{*}(\moy(X))$
  we get
  \[0 \lra \sigma^{*}(\omy[3](X)) \lra \omty[3](\tx) \lra \mo_{E}(-2)
    \lra 0.\]
  Now, using the direct image operator and projection formula we get 
  \[\sigma_{*}\omty[3](\tx)\cong\omy[3](X)\qquad \text{and} \qquad
    R^{i}\sigma_{*}\omty[3](\tx)=0,\]
  the assertion follows from the Leray spectral sequence. Second
  assertion follows in a similar manner from the exact sequence
  \[0 \lra \sigma^{*}(\omy[3])\otimes \moty(2E) \lra \omty[3] \lra \Omega^{3}_{E}
    \lra 0\]
 and the Lefschetz hyperplane theorem $H^{i}(\omy[3])=0$.
\end{proof}

\begin{lem}
The following sequence is exact
  \begin{eqnarray*}
    &&0\lra H^{1}\omy[3] \lra H^{1}\omty[3](\log \tx) \lra
       H^{1}\Omega^{2}_{\tx} \lra 0\\
    &&
  \end{eqnarray*}
\end{lem}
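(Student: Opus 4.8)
The plan is to realise the asserted sequence as a truncation of the long exact cohomology sequence of the residue (Poincar\'e) sequence of the smooth divisor $\tx\subset\ty$, once the two boundary maps have been killed by vanishings already available. Concretely, I would write down
\[0\lra\omty[3]\lra\omty[3](\log\tx)\lra\Omega^{2}_{\tx}\lra 0,\]
valid because $\tx$ is a smooth divisor in the smooth fourfold $\ty$ (the third arrow being the residue along $\tx$), and look at the stretch
\[\cdots\lra H^{0}\Omega^{2}_{\tx}\lra H^{1}\omty[3]\lra H^{1}\omty[3](\log\tx)\lra H^{1}\Omega^{2}_{\tx}\lra H^{2}\omty[3]\lra\cdots\]
of its long exact sequence. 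It then suffices to check that $H^{0}\Omega^{2}_{\tx}=0$, that $H^{2}\omty[3]=0$, and that $H^{1}\omty[3]\cong H^{1}\omy[3]$: the first vanishing makes $H^{1}\omty[3]\hookrightarrow H^{1}\omty[3](\log\tx)$, the second makes $H^{1}\omty[3](\log\tx)\twoheadrightarrow H^{1}\Omega^{2}_{\tx}$, and after the substitution the middle stretch becomes the desired short exact sequence, its left map being pullback of forms by $\sigma$ followed by the inclusion $\omty[3]\hookrightarrow\omty[3](\log\tx)$ and its right map the residue.

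The two statements about $\omty[3]$ are read off from the computation already performed for Proposition \ref{prop2}. Tensoring $0\to\omty[3](\log E)(-E)\to\omty[3]\to\Omega^{3}_{E}\to 0$ identifies the left-hand term with $\sigma^{*}\omy[3]\otimes\moty(2E)$, and — arguing as in the previous propositions — $\sigma_{*}\moty(2E)\cong\moy$ with $R^{i}\sigma_{*}\moty(2E)=0$ for $i>0$, so the projection formula and the Leray spectral sequence give $H^{i}(\ty,\sigma^{*}\omy[3]\otimes\moty(2E))\cong H^{i}\omy[3]$. Since $\Omega^{3}_{E}$ is $\mo_{\PP^{3}}(-4)$ on each component, and hence has vanishing cohomology in degrees $\le 2$, the long exact sequence of the displayed triangle yields $H^{i}\omty[3]\cong H^{i}\omy[3]$ for $i\le 2$; in particular $H^{2}\omty[3]\cong H^{2}\omy[3]=H^{3,2}(Y)=0$ by the Lefschetz hyperplane theorem, as $3+2\ne 4=\dim Y$.

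The remaining input, $H^{0}\Omega^{2}_{\tx}=0$, is the only step carrying real content, and is where the hypothesis that $X$ is a complete intersection, together with the rationality of threefold nodes, genuinely enters. Let $\pi\colon\tx\to X$ be the induced resolution; a threefold node is a rational singularity, so $\pi_{*}\mo_{\tx}=\mo_{X}$ and $R^{i}\pi_{*}\mo_{\tx}=0$ for $i>0$, whence $H^{2}(\tx,\mo_{\tx})\cong H^{2}(X,\mo_{X})$, and this vanishes because $X$ is a complete intersection threefold in $\PP^{r+3}$ (chase the Koszul resolution of $\mo_{X}$, using $H^{i}(\PP^{r+3},\mo(k))=0$ for $0<i<r+3$). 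Hodge symmetry on the smooth projective threefold $\tx$ then gives $H^{0}\Omega^{2}_{\tx}\cong\overline{H^{2}(\tx,\mo_{\tx})}=0$. For a general nodal threefold this $H^{2,0}$ need not vanish, which is why this step — rather than the formal manipulation of the residue sequence — is the crux; once it and $H^{2}\omty[3]=0$ are in place, the surjectivity on the right is automatic and the lemma is immediate.
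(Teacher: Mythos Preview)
Your argument is correct and follows the same route as the paper: both take the long exact sequence of the residue sequence $0\to\omty[3]\to\omty[3](\log\tx)\to\Omega^{2}_{\tx}\to 0$ and truncate it using $H^{0}\Omega^{2}_{\tx}=0$ and $H^{2}\omty[3]=0$. The only cosmetic differences are that the paper cites \cite[Prop.~3]{CR} for $H^{0}\Omega^{2}_{\tx}=H^{2}\mo_{\tx}=0$ where you supply the rationality-plus-Hodge-symmetry argument yourself, and you make explicit the identification $H^{1}\omty[3]\cong H^{1}\omy[3]$ (extracted from the proof of Proposition~\ref{prop2}) that the paper leaves implicit.
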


\begin{proof}
  We have  $H^{0}(\omtx[2])=H^{2}(\mo_{\tx})=0$ (\cite[Prop.~3]{CR})
  and $H^{2}(\omty[3])=0$ (Prop.~\ref{prop2}), now the assertion  
  follows by the long cohomology exact sequence derived from
  \[0\lra\omty[3] \lra\omty[3](\log\tx) \lra \omtx[2]\lra 0\]
\end{proof}
\begin{lem}
  The following sequence is exact
  \begin{eqnarray*}
0\lra H^{0}(\omy[3](X))\lra H^{0}(\omtx[3](\tx)) \lra
  H^{1}(\omty[3](\log\tx)) \lra\rule{16mm}{0cm}\\\lra H^{1}(\omy[3](X))\lra
  H^{1}(\omtx[3](\tx))     
  \end{eqnarray*}
\end{lem}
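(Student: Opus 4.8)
The plan is to read the asserted sequence off the long exact cohomology sequence of a twisted logarithmic short exact sequence on $\ty$, after identifying two of its terms by means of Proposition~\ref{prop2}.

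I would begin with the standard short exact sequence attached to the smooth divisor $\tx\subset\ty$,
\[0\lra\omty[3](\log\tx)(-\tx)\lra\omty[3]\lra\omtx[3]\lra 0,\]
in which the last arrow is restriction of $3$--forms; the content here is the identification of the kernel of $\omty[3]\to\omtx[3]$ with $\omty[3](\log\tx)(-\tx)$, which is a local computation at the points of $\tx$ and needs only that $\tx$ is smooth. Tensoring with $\moty(\tx)$, and using $\moty(\tx)\otimes\mo_{\tx}$ for the twist along $\tx$, gives
\[0\lra\omty[3](\log\tx)\lra\omty[3](\tx)\lra\omtx[3](\tx)\lra 0.\]
Its long exact cohomology sequence reads
\begin{eqnarray*}
  \cdots\lra H^{i}(\omty[3](\log\tx))\lra H^{i}(\omty[3](\tx))\lra\\
  \lra H^{i}(\omtx[3](\tx))\lra H^{i+1}(\omty[3](\log\tx))\lra\cdots,
\end{eqnarray*}
and by the first assertion of Proposition~\ref{prop2} we may replace $H^{i}(\omty[3](\tx))$ by $H^{i}(\omy[3](X))$ throughout. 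The part of this sequence in degrees $0$ and $1$ is therefore exactly the sequence in the statement, save that it is prolonged one step to the left by the term $H^{0}(\omty[3](\log\tx))$; so it remains to show $H^{0}(\omty[3](\log\tx))=0$.

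For this I would use the residue sequence $0\lra\omty[3]\lra\omty[3](\log\tx)\lra\omtx[2]\lra 0$ already invoked in the previous lemma, whose cohomology gives an exact sequence $0\lra H^{0}(\omty[3])\lra H^{0}(\omty[3](\log\tx))\lra H^{0}(\omtx[2])$. Both outer terms vanish: $H^{0}(\omtx[2])=H^{2}(\mo_{\tx})=0$ by \cite[Prop.~3]{CR} as recorded above, and $H^{0}(\omty[3])\cong H^{0}(\omy[3])=0$ — the isomorphism because $\sigma$ is the blow--up of the finite set $\Sigma$, and the vanishing because $H^{3,0}(Y)\subset H^{3}(Y)=H^{3}(\PP^{r+3})=0$ by the Lefschetz hyperplane theorem (equivalently, the case $i=0$ of the second part of Proposition~\ref{prop2}). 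Hence $H^{0}(\omty[3](\log\tx))=0$ and the sequence is exact as claimed.

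The one step that is not a formal diagram chase is establishing the first displayed short exact sequence, i.e.\ recognising $\ker(\omty[3]\to\omtx[3])$ as $\omty[3](\log\tx)(-\tx)$; this is classical but is where care with the logarithmic sheaf is required. Granting it, the rest is the long exact sequence together with Proposition~\ref{prop2} and the two vanishings above, and I anticipate no further difficulty.
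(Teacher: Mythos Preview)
Your proof is correct and follows essentially the same route as the paper: the paper invokes the short exact sequence $0\to\omty[3](\log\tx)\to\omty[3](\tx)\to\omtx[3](\tx)\to 0$ (writing the middle term as $\omy[3](X)$ via the identification of Proposition~\ref{prop2}) and appeals to ``previous lemmata'' for the remaining vanishing, which is exactly the $H^{0}(\omty[3](\log\tx))=0$ you spell out from the residue sequence. Your derivation of the twisted short exact sequence and your explicit justification of the $H^{0}$--vanishing simply make precise what the paper leaves to the reader.
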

\begin{proof}
  Follows from the short exact sequence
  \[0\lra\omty[3](\log\tx) \lra \omy[3](X)\lra
  \omtx[3](\tx)\lra0\]
and previous lemmata.
\end{proof}

\section{Main result}

Now, we shall formulate and prove our main result

\begin{thm}\label{t:main}
Let $F_{1},\dots,F_{r}\in S:=\kk[X_{0},\dots,X_{r+3}]$ be homogeneous
polynomials in $r+4$ variables such that
\begin{itemize}
\item varieties $V(F_{1},\dots,F_{i})$
  are smooth for $i=1,\dots,r-1$,
\item variety $X:=V(F_{1},\dots,F_{r})$ is a threefold with ordinary
  double points as the only singularities.
\end{itemize}

Denote by $\Sigma:=\operatorname{Sing}(X)$ the set of singular points
of $X$, $\mu:=\#\Sigma$ number of its elements and
$d:=d_{1}+\dots+d_{r}$. 
Let $V$ be a linear combination of rows of the matrix
$\bigwedge^{r-1}\operatorname{Jac}  (F_{1},\dots,F_{r})$ which does
not vanish at any point of $\Sigma$ and let $I$ be the ideal generated
by entries of $V$.

Then 
\[h^{1,1}(\hat X)=1+\delta,\quad h^{1,2}(\hat
  X)=h^{1,2}(X_{\operatorname{smooth}})-\mu+\delta\] where  
\[\delta :=\mu-(\dim_{\kk}I^{2d-2r-3}-\dim_{\kk}(I\cap\mathcal J_{\Sigma})^{2d-2r-3})\] 
is the defect of the ideal $I$ at the singular locus of $X$.
\end{thm}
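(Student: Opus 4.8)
The plan is to compute the Hodge numbers of the \emph{big} resolution $\tx$ — the strict transform of $X$ in $\ty$, which is also the blow-up of $X$ along $\Sigma$ — and then pass to a small resolution $\hat X$ using $h^{1,1}(\tx)=h^{1,1}(\hat X)+\mu$ and $h^{1,2}(\tx)=h^{1,2}(\hat X)$, since $\tx$ is obtained from $\hat X$ by blowing up the $\mu$ disjoint exceptional lines. The first goal is the identity $h^{1,2}(\tx)=h^{1,2}(X_{\operatorname{smooth}})-\mu+h^{2}(\omty[3](\log\tx))$. Continuing the long exact sequence of the last Lemma past its displayed range — using $H^{0}(\omty[3](\log\tx))=0$ (which follows from $H^{0}(\omty[3])=0$ and $H^{0}(\omtx[2])=0$) and $H^{2}(\omy[3](X))=0$ (Bott vanishing and Proposition~\ref{prop2}) — one obtains exactness of
\begin{multline*}
0\to H^{0}(\omy[3](X))\to H^{0}(\omtx[3](\tx))\to H^{1}(\omty[3](\log\tx))\to\\
\to H^{1}(\omy[3](X))\xrightarrow{\ \gamma\ }H^{1}(\omtx[3](\tx))\to H^{2}(\omty[3](\log\tx))\to 0,
\end{multline*}
hence $\operatorname{rk}\gamma=h^{1}(\omtx[3](\tx))-h^{2}(\omty[3](\log\tx))$. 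Together with the Lemma preceding it, this expresses $h^{1,2}(\tx)$ through $h^{0}(\omtx[3](\tx))-h^{1}(\omtx[3](\tx))$, the numbers $h^{i}(\omy[3](X))$ and $h^{1}(\omy[3])$, and $h^{2}(\omty[3](\log\tx))$; by the Corollary, the Proposition, adjunction $\omtx[3](\tx)\cong\omty[4](2\tx)\otimes\mo_{\tx}$, and Kodaira vanishing for $\omy[4](kX)$ one gets $h^{0}(\omtx[3](\tx))-h^{1}(\omtx[3](\tx))=h^{0}(\omy[4](2X))-\mu-h^{0}(\omy[4](X))$. Running the identical computation for a smooth $X'\in|\mo_{Y}(d_{r})|$ (where $\ty=Y$, the analogue of $\gamma$ vanishes since $H^{1}(\Omega^{3}_{X'}(X'))=0$ by Kodaira, and every term built only from $Y$ and $\mo_{Y}(d_{r})$ is unchanged) produces $h^{1,2}(X_{\operatorname{smooth}})$ as the same combination with the two $\Sigma$-corrections removed; subtracting gives the identity.

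Next I would show $h^{2}(\omty[3](\log\tx))=h^{1,1}(\hat X)-1$. The residue sequence $0\to\omty[3]\to\omty[3](\log\tx)\to\omtx[2]\to 0$ with $H^{2}(\omty[3])=0$ and $H^{3}(\omtx[2])=0$ (as $H^{5}(\tx)=0$) gives exactness of
\[0\to H^{2}(\omty[3](\log\tx))\to H^{2}(\omtx[2])\xrightarrow{\ \iota_{*}\ }H^{3}(\omty[3])\to H^{3}(\omty[3](\log\tx))\to 0,\]
where $\iota_{*}$ is the Gysin map $H^{2,2}(\tx)\to H^{3,3}(\ty)$, Poincar\'e dual to the restriction $H^{1,1}(\ty)\to H^{1,1}(\tx)$. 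Its source is spanned by the hyperplane class and the classes of the $\mu$ exceptional $\PP^{3}$'s, and their images $h|_{\tx}$ and $Q_{1},\dots,Q_{\mu}$ (the $Q_{p}\cong\PP^{1}\times\PP^{1}$ being the exceptional quadrics) are linearly independent in $H^{2}(\tx)$: the Gram matrix of $(\alpha,\beta)\mapsto\int_{\tx}\alpha\beta\,h$ on this set is block diagonal and non-degenerate, since $h^{2}\cdot Q_{p}=0$, $Q_{p}\cdot Q_{q}\cdot h=0$ for $p\neq q$, and $Q_{p}^{2}\cdot h=-2$. Hence the restriction is injective, $\iota_{*}$ is onto, $H^{3}(\omty[3](\log\tx))=0$, and by hard Lefschetz $h^{2}(\omty[3](\log\tx))=h^{2,2}(\tx)-h^{3,3}(\ty)=h^{1,1}(\tx)-(1+\mu)=h^{1,1}(\hat X)-1$. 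The topological Euler characteristic $\chi(\tx)=\chi(X_{\operatorname{smooth}})+4\mu$ (the Milnor fibre of a threefold node has $\chi=0$, and the big resolution replaces each node by a quadric surface, $\chi=4$) then renders the two Hodge number formulas mutually consistent.

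It remains to prove $h^{2}(\omty[3](\log\tx))=\delta$. From the first sequence, $h^{2}(\omty[3](\log\tx))=\operatorname{coker}\gamma=H^{1}(\omtx[3](\tx))/\operatorname{im}\gamma$, and by the Corollary $H^{1}(\omtx[3](\tx))=\operatorname{coker}\bigl(H^{0}(\omy[4](2X))\to H^{0}(\omy[4](2X)\otimes\mo_{\Sigma})\bigr)$, a quotient of a space $\cong\kk^{\mu}$. So everything reduces to identifying $\operatorname{im}\gamma$ inside $\kk^{\mu}$, and this is where the hypotheses on $V$ enter. Choosing a combination $V$ of rows of $\bigwedge^{r-1}\operatorname{Jac}(F_{1},\dots,F_{r})$ not vanishing at any point of $\Sigma$ yields, in a neighbourhood of $\Sigma$, an explicit splitting of the sheaf sequences on $\ty$, and a Carlson--Griffiths / Jacobi-ring computation identifies the connecting map $\gamma$ on representatives with multiplication by the entries of $V$ — which, as $\deg V=d-d_{r}-r+1$ and $\omy[4](2X)=\mo_{Y}(d+d_{r}-r-4)$, raises degree to $2d-2r-3$ — followed by evaluation at the nodes. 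Because Bott vanishing makes the pertinent comparison map out of $H^{1}(\omy[3](X))$ surjective, the image of $\gamma$ in $\kk^{\mu}$ equals the image of $I^{2d-2r-3}$ there; hence $\dim\operatorname{im}\gamma=\dim_{\kk}I^{2d-2r-3}-\dim_{\kk}(I\cap\mathcal J_{\Sigma})^{2d-2r-3}$ and
\[h^{2}(\omty[3](\log\tx))=\mu-\bigl(\dim_{\kk}I^{2d-2r-3}-\dim_{\kk}(I\cap\mathcal J_{\Sigma})^{2d-2r-3}\bigr)=\delta.\]
Substituting into the two previous paragraphs gives $h^{1,1}(\hat X)=1+\delta$ and $h^{1,2}(\hat X)=h^{1,2}(X_{\operatorname{smooth}})-\mu+\delta$.

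The main obstacle is the identification of $\operatorname{im}\gamma$ in the last paragraph: one must make precise — through residues and the Jacobian module of the complete intersection $Y$, and their behaviour under the blow-up $\sigma$ — that $\gamma$ is ``multiply by $V$, then restrict to the nodes'', and check that the non-vanishing of $V$ on $\Sigma$, together with a Castelnuovo--Mumford regularity bound ensuring $2d-2r-3$ is large enough, makes $I^{2d-2r-3}$ surject onto $(S/\mathcal J_{\Sigma})^{2d-2r-3}$. Everything else — the two diagram chases, the linear independence of $h|_{\tx},Q_{1},\dots,Q_{\mu}$, and the Euler-characteristic bookkeeping — is routine given the vanishing theorems already assembled in the Preliminaries.
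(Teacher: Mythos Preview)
Your approach is essentially the paper's: the same short exact sequences on $\ty$, the same comparison with a smooth member, and the same reduction of everything to the cokernel of the map you call $\gamma$ (the paper's $\phi$) inside $\kk^{\mu}$. Your quantity $h^{2}(\omty[3](\log\tx))$ is precisely $\mu-\dim(\operatorname{im}\beta+\operatorname{im}\delta)$ in the paper's notation, and the identification with $\delta$ is the paper's Proposition~8.

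Two points of comparison. First, your Gysin argument that $h^{2}(\omty[3](\log\tx))=h^{1,1}(\hat X)-1$ is an addition; the paper instead deduces $h^{1,1}(\hat X)$ from $h^{1,2}(\hat X)$ and the Euler characteristic count, so your route gives the two Hodge numbers independently rather than one from the other. Second, the step you correctly flag as the obstacle --- identifying $\operatorname{im}\gamma$ with the image of $I^{2d-2r-3}$ in $\kk^{\mu}$ --- is exactly where the paper supplies the missing content: it constructs an explicit epimorphism $\bigoplus_{i=1}^{r-1}S^{d+d_{i}-r-4}\twoheadrightarrow H^{1}(\omy[3](X))$ (Lemma~7, by inductively peeling off the hypersurfaces $H_{i}$) and then identifies the composite $\beta$ into $\kk^{\mu}$, via Poincar\'e residues, as $A_{i}\mapsto V_{i}(P)A_{i}(P)/V_{r}(P)^{2}$. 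Together with the analogous description of $\theta$ this yields $\operatorname{im}\beta+\operatorname{im}\delta=\operatorname{im}\bigl(I^{2d-2r-3}\to\kk^{\mu}\bigr)$.

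One small misstep: your appeal to a Castelnuovo--Mumford bound to make ``$I^{2d-2r-3}$ surject onto $(S/\mathcal J_{\Sigma})^{2d-2r-3}$'' is not what is needed --- indeed such surjectivity would force $\delta=0$. What is actually required for the reverse inclusion is the surjectivity of $\bigoplus_{i}S^{d+d_{i}-r-4}\to H^{1}(\omy[3](X))$, and that comes from Bott vanishing along the chain $Y\subset Z\subset\cdots\subset\PP^{r+3}$, not from regularity of $\Sigma$.
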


\begin{lem}
  There exists an epimorphism 
  \[\bigoplus_{i=1}^{r-1}S^{d+d_{i}-r-4} \lra
    H^{1}\omy[3](X).\] 
\end{lem}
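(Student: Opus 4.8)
The plan is to identify the sheaf $\omy[3](X)$ with a twist of the tangent sheaf of $Y$ and then to read off the claimed surjection from the normal bundle sequence of $Y\subset\PP^{r+3}$. Since $Y$ is a smooth fourfold, the perfect pairing $\omy[3]\otimes\omy[1]\lra\omy[4]=\omega_{Y}$ yields $\omy[3]\cong \mathcal T_{Y}\otimes\omega_{Y}$, while by adjunction $\omega_{Y}\cong\moy(d-d_{r}-r-4)$ and $\moy(X)\cong\moy(d_{r})$. Hence, writing $m:=d-r-4$, one gets $\omy[3](X)\cong\mathcal T_{Y}(m)$, and it suffices to exhibit an epimorphism $\bigoplus_{i=1}^{r-1}S^{m+d_{i}}\lra H^{1}(\mathcal T_{Y}(m))$, since $m+d_{i}=d+d_{i}-r-4$.

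Next I would twist the normal bundle sequence $0\lra\mathcal T_{Y}\lra \mathcal T_{\PP^{r+3}}|_{Y}\lra N_{Y/\PP^{r+3}}\lra0$, where $N_{Y/\PP^{r+3}}\cong\bigoplus_{i=1}^{r-1}\moy(d_{i})$, by $\moy(m)$ and pass to cohomology. The relevant segment is
\[\bigoplus_{i=1}^{r-1}H^{0}(\moy(m+d_{i}))\lra H^{1}(\mathcal T_{Y}(m))\lra H^{1}(\mathcal T_{\PP^{r+3}}|_{Y}(m)),\]
so the connecting homomorphism is surjective once $H^{1}(\mathcal T_{\PP^{r+3}}|_{Y}(m))=0$. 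This vanishing follows from the restricted Euler sequence $0\lra\moy(m)\lra\moy(m+1)^{\oplus r+4}\lra\mathcal T_{\PP^{r+3}}|_{Y}(m)\lra0$ (exact because the Euler sequence is a sequence of vector bundles, hence locally split), together with $H^{1}(\moy(m+1))=H^{2}(\moy(m))=0$; indeed a smooth complete intersection of dimension $4$ has no intermediate cohomology of line bundles, i.e. $H^{i}(\moy(k))=0$ for $0<i<4$ and all $k$, which is obtained from the Koszul resolution of $\moy$ on $\PP^{r+3}$.

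Finally, because $Y$ is a complete intersection it is projectively normal, so the restriction map $S^{k}=H^{0}(\PP^{r+3},\mo(k))\lra H^{0}(\moy(k))$ is surjective for every integer $k$ (trivially when $k<0$, since then both sides vanish). Composing $\bigoplus_{i=1}^{r-1}S^{m+d_{i}}\twoheadrightarrow\bigoplus_{i=1}^{r-1}H^{0}(\moy(m+d_{i}))$ with the surjective connecting homomorphism yields the desired epimorphism onto $H^{1}(\mathcal T_{Y}(m))\cong H^{1}\omy[3](X)$. I do not expect a serious obstacle: the only inputs are standard facts about smooth complete intersections (projective normality and vanishing of intermediate cohomology of line bundles), and the single point requiring a little care is the bookkeeping of the twist $m=d-r-4$ coming from adjunction, together with the remark that the Euler and conormal sequences of the ambient $\PP^{r+3}$ stay exact after restriction to $Y$.
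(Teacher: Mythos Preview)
Your argument is correct, but it follows a different route from the paper's. The paper proceeds by induction on the codimension: it introduces (via Bertini) an auxiliary smooth fivefold $Z=H_{1}\cap\dots\cap H_{r-2}$ and chains together the exact sequences coming from $\Omega^{4}_{Z}(\log Y)$ and adjunction to pass from $H^{1}\Omega^{3}_{Y}(X)$ to a cohomology group on $Z$, then repeats. You instead collapse the induction to a single step by using the duality $\Omega^{3}_{Y}\cong\mathcal T_{Y}\otimes\omega_{Y}$ on the smooth fourfold $Y$; after that, the twisted normal bundle sequence together with the restricted Euler sequence and the arithmetic Cohen--Macaulay property of $Y$ deliver the surjection directly. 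Your proof is shorter and uses only standard facts. The advantage of the paper's approach is that the iterated Poincar\'e residues make the epimorphism explicit, and this explicit form is what the paper subsequently uses to identify the map $\beta$ in its main diagram and to compute the defect. If one wanted to slot your proof into the paper, one would need to verify that the connecting homomorphism of the normal bundle sequence, transported through $\Omega^{3}_{Y}\cong\mathcal T_{Y}\otimes\omega_{Y}$, coincides with the residue map used there; this is true, but it is an extra check your write-up does not address.
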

\begin{proof}
  Let $Z$ be a complete intersection of $r-2$ hypersurfaces $H_{i}$.
  Using Bertini theorem we can assume without lost of generality that
  $Z:=H_{1}\cap\dots\cap H_{r-2}$ is a smooth fivefold.
  By similar arguments as before we easily get 
  exact sequences 
  \begin{eqnarray*}
   H^{1}\Omega^{4}_{Z}(\log Y)(X)\lra H^{1}\omy[3](X)\lra 0&&\\
   H^{0}\omy[4](X)\otimes\mo_{Z}(Y)\lra H^{1}\Omega^{4}_{Z}(\log Y)(X) \lra
  H^{1}\Omega^{4}_{Z}(X+Y)\lra0&&
  \end{eqnarray*}
  By adjunction and the Bott vanishing we get recursively that
  $H^{0}(\omy[4](X)\otimes\mo_{Z}(Y))$ is an image of
  $S^{d+d_{r-1}-r-4}$. Now, the lemma follows by 
  induction.
\end{proof}
Consider the following commutative diagram
\begin{equation*}
  \begin{diagram}\dgARROWLENGTH=1.2em
\node[2]{\bigoplus_{i=1}^{d_{r}-1}S^{d+d_{i}-r-4}}
\arrow{s,r}{\beta}\arrow{e,t}{\alpha}
\node{H^{1}\left(\Omega^{3}_{Y}(X) \right)}
\arrow{s,r}{\phi}\arrow{e} \node{0}\\
    \node{H^{0}(\Omega^{4}_{Y}(2X))} \arrow {e,t}{\delta}
    \node{H^{0}(\Omega^{4}_{Y}(2X)\otimes \mo_{\Sigma})} \arrow{e,t}{\gamma}
    \node{H^{1}(\Omega^{3}_{\tx}(\tx))} \arrow{e}\node0\\
\node{S^{d+d_{r}-r-4}}\arrow{n,l}{\eta}\arrow{ne,l}{\theta}
\node{\kk^{\mu}}\arrow{n,r}{\cong}
\node{H^{1}(\Omega^{4}_{Y}(2X)\otimes \mathcal J_{\Sigma})}
\arrow{n,r}{\cong}
  \end{diagram}
\end{equation*}
All the maps except $\beta$ are determined by the proofs we presented,
on the other hand the identification $H^{0}(\Omega^{4}_{Y}(2X)\otimes
\mo_{\Sigma}) \cong \kk^{\mu}$ is not given explicitly. 

Denote by $\Omega$ the form
$\Omega:=\sum_{i=0}^{r+3}X_{i}dX_{0}\wedge\dots\wedge
\widehat{dX_{i}}\wedge \dots\wedge dX_{r+3}$. 
The map $\theta$ to a function $A$ associates Poincare residue of the
form $\frac A{F_{1}\dots F_{r-1}F_{r}^{2}}\Omega$ with respect to
$\frac{dF_{1}}{F_{1}},
\frac{dF_{2}}{F_{2}},\dots,\frac{dF_{r-1}}{F_{r-1}}$ evaluated at
points of $\Sigma$.
When we want to identify values of $\theta$ with vectors we have to
evaluate coefficients of resulting form, which is the same as evaluate
quotients of $A$ by $(r-1)\times(r-1)$ minors of the jacobian matrix
of $F_{1},\dots,F_{r-1}$. 

At each point of $\Sigma$ the jacobian matrix $\JF(F)$ has rank $r-1$, so
the matrix $\bigwedge^{r-1}\JF(F)$ of  $(r-1)\times(r-1)$ minors has rank 1.
By our assumption all the rows of this matrix are non--zero, so at
every point of $\Sigma$ some columns are zero the other columns have
are proportional and have only non--zero entries. It may happen however
that each column vanish at some point of $\Sigma$. In order to
circumvent this problem we take a random linear combination of columns
$(V_{1},\dots,V_{r})$ which does not vanish at any point. 

Composing with $\alpha,\gamma,\phi$, we see
that $\beta$ can be identified in the same manner as $\theta$ through
remaining  $(r-1)\times(r-1)$ minors of the jacobian matrix $\JF(F)$
of $F_{1},\dots,F_{r}$, main difference is that from $S^{d+d_{i}-r-4}$
we pass through $H^{0}(\omy[4](X)\otimes\mo_{Z}(Y))$ instead of
$H^{0}(\omy[4](2X))$ which means that we have to multiply by
$F_{r}/F_{i}$. Evaluating at a singular point we have to pass to the
limit equal $V_{i}/V_{r}$.
Finally, denoting $\Sigma:=\{P_{1},\dots,P_{\mu}\}$ the value of
$\beta$ at $A_{i}\in S^{d+d_{i}-r-4}$ is
$\frac{V_{i}(P)A_{i}(P)}{V_{r}(P)^{2}}$. 
Denote the ideals $I=(V_{1},\dots,V_{r})$, $J=I\cap\mathcal
J_{\Sigma}$ and by $I^{k}$ (resp. $J^{k}$) vector space of degree $k$
forms in $I$ resp. $J$. 
We have proved the following proposition
\begin{prop}
\[\dim(\im(\delta)+\im(\beta))=\dim I^{2d-2r-3}-\dim J^{2d-2r-3}.\]  
\end{prop}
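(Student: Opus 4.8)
The plan is to feed the explicit descriptions of $\delta$ and $\beta$ obtained above into a single rank--nullity count for evaluation on the ideal $I$. Fix the identification $H^{0}(\omy[4](2X)\otimes\mo_{\Sigma})\cong\kk^{\mu}=\bigoplus_{k=1}^{\mu}\kk$ coming from the Poincar\'e residues used above, with $\Sigma=\{P_{1},\dots,P_{\mu}\}$. The map $\eta\colon S^{d+d_{r}-r-4}\to H^{0}(\omy[4](2X))$ is surjective ($Y$ being projectively normal), so $\im(\delta)=\im(\delta\circ\eta)$, and the computations of $\theta=\delta\circ\eta$ and of $\beta$ (through $\alpha,\gamma,\phi$ and the minors of $\operatorname{Jac}(F_{1},\dots,F_{r})$) realize, as subspaces of $\kk^{\mu}$,
\[\im(\delta)=\bigl\{\,\bigl(A(P_{k})/V_{r}(P_{k})\bigr)_{k}\ :\ A\in S^{d+d_{r}-r-4}\bigr\}\]
and
\[\im(\beta)=\bigl\{\,\bigl(\sum_{i=1}^{r-1}V_{i}(P_{k})A_{i}(P_{k})/V_{r}(P_{k})^{2}\bigr)_{k}\ :\ A_{i}\in S^{d+d_{i}-r-4}\bigr\}.\]

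Next I would clear denominators. Since $V_{r}$ is a combination of $(r-1)\times(r-1)$ minors of $\operatorname{Jac}(F_{1},\dots,F_{r-1})$ and $Y$ is smooth along $\Sigma$, no value $V_{r}(P_{k})$ vanishes (this is also what makes the Poincar\'e residues above well defined), so multiplication by $\bigl(V_{r}(P_{k})^{2}\bigr)_{k}$ is an automorphism $\psi$ of $\kk^{\mu}$. As $\deg V_{i}=d-d_{i}-r+1$, one has $\deg(V_{i}A_{i})=2d-2r-3$ for every $i$, the case $i=r$ included; hence $\psi$ carries $\im(\delta)$ onto $\operatorname{ev}\bigl((V_{r})^{2d-2r-3}\bigr)$ and $\im(\beta)$ onto $\operatorname{ev}\bigl((V_{1},\dots,V_{r-1})^{2d-2r-3}\bigr)$, where $\operatorname{ev}\colon S^{2d-2r-3}\to\kk^{\mu}$ is evaluation at $\Sigma$ and $\mathfrak a^{k}$ denotes the degree $k$ graded piece of a homogeneous ideal $\mathfrak a$, as in the statement. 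Because $I=(V_{1},\dots,V_{r})=(V_{1},\dots,V_{r-1})+(V_{r})$, and the degree $N$ part of a sum of homogeneous ideals is the sum of the degree $N$ parts, linearity of $\operatorname{ev}$ gives
\[\psi\bigl(\im(\delta)+\im(\beta)\bigr)=\operatorname{ev}\bigl((V_{1},\dots,V_{r-1})^{2d-2r-3}\bigr)+\operatorname{ev}\bigl((V_{r})^{2d-2r-3}\bigr)=\operatorname{ev}\bigl(I^{2d-2r-3}\bigr).\]

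Finally, $\psi$ is an isomorphism, so $\dim\bigl(\im(\delta)+\im(\beta)\bigr)=\dim\operatorname{ev}(I^{2d-2r-3})=\dim I^{2d-2r-3}-\dim\bigl(I^{2d-2r-3}\cap\ker\operatorname{ev}\bigr)$ by rank--nullity. The kernel of $\operatorname{ev}$ in degree $2d-2r-3$ is $(\mathcal J_{\Sigma})^{2d-2r-3}$, and since $I$ and $\mathcal J_{\Sigma}$ are homogeneous, $I^{2d-2r-3}\cap(\mathcal J_{\Sigma})^{2d-2r-3}=(I\cap\mathcal J_{\Sigma})^{2d-2r-3}=J^{2d-2r-3}$, which is the asserted equality. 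The substantive input---already supplied by the discussion preceding the statement---is the identification of $\delta$ (through $\eta$) and of $\beta$ with these evaluation maps and the consistent $V_{r}$-normalization of the Poincar\'e residues across the two maps; that bookkeeping, together with the use of $V_{r}\neq 0$ on $\Sigma$ to make $\psi$ invertible, is the only real obstacle.
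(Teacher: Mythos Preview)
Your argument is correct and is precisely the linear-algebra computation the paper has in mind when it writes ``We have proved the following proposition'' after the discussion of $\theta$ and $\beta$: identify both images with evaluations of graded pieces of the ideals $(V_r)$ and $(V_1,\dots,V_{r-1})$ after a diagonal rescaling of $\kk^{\mu}$, then apply rank--nullity. One small point of exposition: your justification that $V_r(P_k)\neq 0$ (``since $Y$ is smooth along $\Sigma$'') is not quite self-contained---smoothness of $Y$ only says that the column of $\bigwedge^{r-1}\operatorname{Jac}(F)$ corresponding to omitting $F_r$ is nonzero at $P_k$; to conclude that your particular linear combination $V_r$ is nonzero you also need the rank--$1$ structure of $\bigwedge^{r-1}\operatorname{Jac}(F)$ at $P_k$ together with the hypothesis $V(P_k)\neq 0$, exactly as in the paper's discussion. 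With that clarification the proof is complete.
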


\begin{proof}[\mbox{Proof of Thm.~\ref{t:main}}]
By simple linear algebra we get 
\[h^{1}(\omtx[2])=h^{0}(\omy[4](2X))-h^{0}(\omy[4](X)) -
  h^{0}(\omy[3](X)) + h^{1}(\omy[3](X))-\dim(\im\beta+\im\delta).\]
Repeating the computations for a smooth complete intersection
$X_{\operatorname{smooth}}$ of the same type we get 
\[h^{1}(\Omega^{2}_{X_{\rm smooth}})=h^{0}(\omy[4](2X))-h^{0}(\omy[4](X)) -
  h^{0}(\omy[3](X)) + h^{1}(\omy[3](X))\]
so by previous Proposition
\[h^{1,2}(\tx)=h^{1,2}(X_{\rm smooth})-\mu+\delta.\]
As $\tx$ is the blow--up of $\mu$ lines in any small resolution $\hat X$
we get formula for $h^{1,2}(\hat X)$, formula for $h^{1,1}(\hat X)$
follows now from an easy Milnor number computation.
\end{proof}

\section{Examples}
Defect formula in main theorem can be easily implemented in a computer
algebra system, we use Magma code (\cite{Magma}).
\begin{exmp}
  Denote by $X(d_{1},\dots,d_{r};e_{1},\dots,e_{r+1})$ general
  complete intersection of hypersurfaces of degrees
  $d_{1},\dots,d_{r}$ in $\PP^{r+3}$ containing general complete
  intersection surface of degrees $e_{1},\dots,e_{r+1}$. 
  In \cite{CR2} these nodal threefolds were studied as candidates
  for non--factorial nodal complete intersections with minimal number
  of nodes (cf. \cite{Kl, Ko}). Using our main result we check that the
  defect equals 1 for the following cases with $r=2$ and
  $d_{1}+d_{2}=6$ (Calabi--Yau cases), 

\[
  \begin{array}{c@{\ }c@{\ }c@{\ }c@{\ }c|c|c|c}
    d_{1}&\ d_{2}&\ e_{1}&\ e_{2}&\ e_{3} &
    \mu&h^{1,1}&h^{1,2}\\ 
    \hline
    4&2&1&1&1&13&2&77\\
    4&2&2&1&1&18&2&72\\
    4&2&2&2&1&24&2&66\\
    4&2&2&2&2&32&2&58\\
    4&2&3&2&1&18&2&72\\
    4&2&3&2&2&24&2&66\\
    4&2&3&3&2&18&2&72\\    
  \end{array}
\]
\end{exmp}
\begin{exmp}We use our main result to verify computations of the Hodge
  numbers of some rigid Calabi--Yau complete intersections.

  Complete intersection of four quadrics in
  projective space $\PP^{7}$
  \[\begin{array}{c}
Y_{0}^{2}=X_{0}^{2}+X_{1}^{2}+X_{2}^{2}+X_{3}^{2}\\
Y_{1}^{2}=X_{0}^{2}-X_{1}^{2}+X_{2}^{2}-X_{3}^{2}\\
Y_{2}^{2}=X_{0}^{2}+X_{1}^{2}-X_{2}^{2}-X_{3}^{2}\\
Y_{3}^{2}=X_{0}^{2}-X_{1}^{2}-X_{2}^{2}+X_{3}^{2}\\
\end{array}
\]
studied by van Geemen and Nygaard in \cite{vGN}. Using counting points
in characteristic 17 they proved that small resolution of this
complete intersection is rigid, i.e. $h^{1,1}=32, h^{1,2}=0$. 
The Hodge numbers of a smooth complete intersection of four quadrics
equal 
\[h^{1,1}=1, h^{1,2}=65.\]
Using magma code we compute 
\[\dim_{\CC}I^{5}=144, \dim_{\CC}(I\cap\mathcal J_{\Sigma})^{5}=79,
  \mu=96, \delta=96-(144-79)=31\]
and finally for the Hodge numbers of the van Geemen Nygaard complete
intersection equals
\[h^{1,1}=1+31=32,\ \  h^{1,2}=65-96+31=0.\]
as computed in \cite{vGN}.

For the complete intersection of a quadric in quartic in $\PP^{5}$
given by \cite{WvG} 
  \begin{eqnarray*}
    x_{1}^{2}+x_{2}^{2}+x_{3}^{2}&=&x_{4}^{2}+x_{5}^{2}+x_{6}^{2}\\
    x_{1}^{4}+x_{2}^{4}+x_{3}^{4}&=&x_{4}^{4}+x_{5}^{4}+x_{6}^{4}
  \end{eqnarray*}
In this case
\[\dim_{\CC}I^{5}=200, \dim_{\CC}(I\cap\mathcal J_{\Sigma})^{5}=111,
  \mu=122, \delta=122-(200-111)=33\]
and 
\[h^{1,1}=1+33=34,\ \  h^{1,2}=89-122+33=0.\]

Desingularized self fiber product of the Beauville surface $\Gamma(3)$
(constructed by Schoen in \cite{schoen}) is birational to the complete intersection
\begin{eqnarray*}
  x_{1}^{3}+x_{2}^{3}+x_{3}^{3}&=&x_{4}^{3}+x_{5}^{3}+x_{6}^{3}\\
  x_{1}x_{2}x_{3}&=&x_{4}x_{5}x_{6}
\end{eqnarray*}
with 108 nodes. We get
\[\dim_{\CC}I^{5}=219, \dim_{\CC}(I\cap\mathcal J_{\Sigma})^{5}=146,
  \mu=108, \delta=108-(219-146)=35\]
and 
\[h^{1,1}=1+35=36,\ \ h^{1,2}=73-108+35=0.\]
\end{exmp}
We have also computed Hodge numbers of nodal complete intersections
studied in \cite[Ch.~5]{Meyer} confirming Meyer's results.

\end{document}